\documentclass[reqno,12pt]{amsart}


\textheight24.5cm
\textwidth16cm

\addtolength{\topmargin}{-40pt}
\addtolength{\oddsidemargin}{-1.8cm}
\addtolength{\evensidemargin}{-1.8cm}

\usepackage{amsmath, amsthm, amsfonts, amssymb,color}
\input{mathrsfs.sty}
\usepackage{amsmath}
\usepackage{amsfonts}
\usepackage{amssymb}
\usepackage{eufrak}
\usepackage{amscd}
\usepackage{amsthm}
\usepackage{amstext}
\usepackage[all]{xy}

\usepackage{txfonts}

\newcommand{\id}{\operatorname{id}}

\newcommand{\supp}{\operatorname{supp}}
\newcommand{\dr}{\operatorname{dir}}
\newcommand{\rk}{\operatorname{rk}}

   \theoremstyle{plain}
   \newtheorem{thm}{Theorem}
   
   \newtheorem{lem}[thm]{Lemma}
   \newtheorem{cor}[thm]{Corollary}
   \theoremstyle{definition}
   
   \newtheorem{defn}[thm]{Definition}
   \newtheorem{quest}[thm]{Question}

	\newtheorem{remark}{Remark}
	
   \theoremstyle{remark}

\author{Yu. Kordyukov}
\address{Institute of Mathematics, Ufa Federal Research Centre, Russian Academy of Sciences, 112~Chernyshevsky str., 450008 Ufa, Russia and Kazan Federal University, 18 Kremlyovskaya str., Kazan, 420008, Russia}  
\email{yurikor@matem.anrb.ru}

\author{V. Manuilov}
\address{Moscow Center for Fundamental and Applied Mathematics, Moscow State University,
Leninskie Gory 1, Moscow, 
119991, Russia}
\email{manuilov@mech.math.msu.su}

\thanks{}
\date{}

\title{On topological obstructions to the existence of non-periodic Wannier bases}

\sloppy

\begin{document}

\begin{abstract}

Recently, M. Ludewig and G. C. Thiang introduced a notion of a uniformly localized Wannier basis with localization centers in an arbitrary uniformly discrete subset $D$ in a  complete Riemannian manifold $X$. They show that, under certain geometric conditions on $X$, the class of the orthogonal projection onto the span of such a Wannier basis in the $K$-theory of the Roe algebra $C^*(X)$ is trivial. In this paper, we clarify the geometric conditions on $X$, which guarantee triviality of the $K$-theory class of any Wannier projection. We show that this property is equivalent to triviality of the unit of the uniform Roe algebra of $D$ in the $K$-theory of its Roe algebra, and provide a geometric criterion for that.  As a consequence, we prove triviality of the $K$-theory class of any Wannier projection on a connected proper measure space $X$ of bounded geometry with a uniformly discrete set of localization centers.

\end{abstract}

\maketitle

\section{Introduction}
Wannier functions, introduced by G. Wannier in 1937, have become a fundamental tool in theoretical and computational solid-state physics. Given a Schr\"odinger operator $\mathcal H$ with periodic potential acting on $L^2(\mathbb R^d)$, the Wannier functions form an orthonormal basis of a spectral subspace of $\mathcal H$ constructed from a finite set of functions along with their translations in $\mathbb Z^d$. In \cite{Ludewig20}, M. Ludewig and G. C. Thiang initiated the study of Wannier bases for (magnetic) Schr\"odinger operators on general Riemannian manifolds, invariant under non-abelian discrete groups of isometries. Furthermore, in \cite{Ludewig}, they introduced the general notion of a uniformly localized Wannier basis with localization centers to come from an arbitrary uniformly discrete subset $D$ in a  complete Riemannian manifold $X$. They asked the following question:
 
\begin{quest}
Does a given subspace $H \subset L^2(X)$ admit a uniformly localized Wannier
basis, whose localization centers come from some uniformly discrete set $D\subset X$? If not, what
is the obstruction?
\end{quest}

In \cite{Ludewig}, it is proved that if $H \subset L^2(X)$ admits a uniformly localized Wannier basis with localization centers in some uniformly discrete set $D\subset X$, then the orthogonal projection $p_H$ onto $H$ (which will be called the Wannier projection in this case) lies in the Roe $C^*$-algebra $C^*(X)$. The main result of \cite{Ludewig}, Theorem 3.6, states that, under certain geometric conditions on $X$, the class $[p_H]\in K_0(C^*(X))$ of any Wannier projection $p_H$ in the $K$-theory of the Roe algebra $C^*(X)$ is trivial. In other words, if $[p_H]$ is non-trivial, then $H$ does not admit a uniformly localized Wannier basis with localization centers $D\subset X$, for any choice of uniformly discrete set $D$. We refer to the papers \cite{Bourne-M,MMP} and references therein for various approaches to the analysis of Wannier bases for non-periodic systems.

In this paper we clarify the geometric conditions on $X$, which guarantee triviality of the $K$-theory class of any Wannier projection. We consider a more general setting of metric measure spaces. On the other hand, we consider only compactly supported Wannier functions, but it was shown in \cite{Ludewig} how to reduce the case of Wannier functions of rapid decay to the case of compactly supported ones under the assumption of polynomial growth of $X$ by an explicit construction of the Murray--von Neumann equivalence between the corresponding Wannier projections. We show that triviality of the $K$-theory class of a Wannier projection with localization centers in $D$ is equivalent to triviality of the unit of the uniform Roe algebra in the $K_0$ group of the Roe algebra of $D$ (Theorem \ref{t1}). We also provide a geometric criterion for the latter property (Theorem \ref{geom-cr}).  Our main result is triviality of the $K$-theory class of any Wannier projection on a connected proper metric measure space $X$ of bounded geometry with a uniformly discrete set of localization centers (Theorem \ref{c:man}):

\theoremstyle{plain}
\newtheorem*{mainthm}{Main Theorem}
\begin{mainthm}
Let $X$ be a connected proper measure space of bounded geometry such that the measure on $X$ is non-atomic. Then, for any Wannier projection $p_\phi$ with a uniformly discrete set $D_0$ of localization centers we have $[p_\phi]=0$ in $K_0(C^*_{\mathbb C}(X))$. 
\end{mainthm}

\section{Preliminaries}

 Let $X$ be a proper metric measure space, that is, $X$ is a set, which is equipped with a metric $d$ and a measure $m$ defined on the Borel $\sigma$-algebra defined by the topology on $X$ induced by the metric, and all balls are compact. Let $D\subset X$ be a discrete subspace. We assume that $D$ is uniformly discrete (i.e.\ $\inf_{g,h\in D,g\neq h}d(g,h)>0$) and has bounded geometry (i.e.\ for any $R>0$ the number of points of $D$ in each ball of radius $R$ is uniformly bounded). We may consider $D$ as a measure space with the measure of any point equal to one. We will say that $D$ is coarsely equivalent to $X$ if the inclusion $D\subset X$ is a coarse equivalence. The latter means that there exists $C>0$ such that for any $x\in X$ there exists $y\in D$ with $d(x,y)<C$. In this case,  it is natural to think of $D$ as a discretization of $X$.

For a Hilbert space $H$ we write $\mathbb B(H)$ (resp., $\mathbb K(H)$) for the algebra of all bounded (resp., all compact) operators on $H$. Recall the definition of the Roe algebra of $X$ \cite{Roe}. 
Let $H_X$ be a Hilbert space with an action of the algebra $C_0(X)$ of continuous functions on $X$ vanishing at infinity (i.e.\ a $*$-homomorphism $\psi:C_0(X)\to\mathbb B(H_X)$). We will assume that $\{\psi(f)\xi:f\in C_0(X),\xi\in H_X\}$ is dense in $H_X$ and $\psi(f)\in\mathbb K(H_X)$ implies that $f=0$. An operator $T\in\mathbb B(H_X)$ is {\it locally compact} if the operators $T\psi(f)$ and $\psi(f)T$ are compact for any $f\in C_0(X)$. It has {\it finite propagation} if there exists some $R>0$ such that $\psi(f)T\psi(g)=0$ whenever the distance between the supports of $f,g\in C_0(X)$ is greater than $R$. The {\it Roe algebra} $C^*(X,H_X)$ is the norm completion of the $*$-algebra of locally compact, finite propagation operators on $H_X$.

If $H_X=L^2(X)\otimes H$ for some Hilbert space $H$ (possibly finite-dimensional) equipped with a standard action of $C_0(X)$ on $H_X$ by multiplication, then the Roe algebra $C^*(X,H_X)$ will be denoted by $C^*_H(X)$.

Often one may forget about $H$, namely one may take $H$ one-dimensional. This happens when the operator of multiplication by any non-zero $f\in C_0(X)$ in $L^2(X)$ is not compact, i.e.\ when the measure on $X$ has no atoms. In this case the algebras $C^*_H(X)$ and $C^*_{\mathbb C}(X)$ are isomorphic (non-canonically), and their $K_0$-groups are canonically isomorphic (\cite{Ewert-Meyer}, Theorem 1). 

But for a discrete space $D$ this is not true: $C^*_H(D)$ is not isomorphic to $C^*_{\mathbb C}(D)$, so for discrete spaces we have two algebras: the {\it Roe algebra} $C^*_H(D)$ with an infinite-dimensional Hilbert space $H$, usually denoted by $C^*(D)$, and the {\it uniform Roe algebra} $C^*_{\mathbb C}(D)$, usually denoted by $C^*_u(D)$. 

The following construction from \cite[Section 4]{HRY} allows to induce maps between Roe algebras from maps between spaces. Given two metric measure spaces, $X$ and $Y$, and two Hilbert spaces, $H_X$ and $H_Y$, with respective actions $\psi_X$ and $\psi_Y$ of $C_0(X)$ and $C_0(Y)$, respectively, a coarse map $F:X\to Y$ induces a $*$-homomorphism $C^*(X,H_X)\to C^*(Y,H_Y)$, $T\mapsto VTV^*$, where $V:H_X\to H_Y$ is an isometry that covers $F$, which means that there exists $C>0$ such that $\psi_Y(g)V\psi_X(f)=0$ when $d(\supp f,\supp(g\circ F))>C$, $f\in C_0(X)$, $g\in C_0(Y)$. 

Let $V:\mathbb C\to H$ be an isometry, i.e.\ an inclusion of $\mathbb C$ onto a one-dimensional subspace of $H$, and let $V_X=\id\otimes V:L^2(X)=L^2(X)\otimes\mathbb C\to L^2(X)\otimes H$. Clearly, $V_X$ covers the identity map of $X$, and we get the maps 
$$
i_X:C^*_{\mathbb C}(X)\to C^*_H(X)\quad\mbox{and}\quad i_D:C^*_{\mathbb C}(D)\to C^*_H(D)
$$
given by $i_X(T)=V_XTV^*_X$ and $i_D(T)=V_DTV_D^*$ respectively. It is easy to see that 
$$
i_X(T)=T\otimes e\quad\mbox{and}\quad i_D(T)=T\otimes e, 
$$
where $e=VV^* \in\mathbb K(H)$ is a rank one projection.
These two maps induce maps in $K$-theory
$$
(i_X)_*:K_0(C^*_{\mathbb C}(X))\to K_0(C^*_H(X))\quad\mbox{and}\quad (i_D)_*:K_0(C^*_{\mathbb C}(D))\to K_0(C^*_H(D)),
$$
where the first map is an isomorphism when the measure on $X$ has no atoms. These maps are independent of the choice of $V$.

\section{Wannier projections as the image of the unit of $C^*_{\mathbb C}(D)$}\label{section2}

Let $X$ be a proper metric measure space with a non-atomic measure, and let $D_0\subset X$ be a uniformly discrete subspace.

\begin{defn}\label{Wa}
Let $\{\phi_x:x\in D_0\}$ be an {\it orthonormal} set of functions in $L^2(X)$. If there exists $R>0$ such that $\supp\phi_x\subset B_R(x)$ for any $x\in D_0$, where $B_R(x)$ denotes the ball of radius $R$ centered at $x$, then the set of functions $\{\phi_x:x\in D_0\}$ is called a {\it $D_0$-compactly supported Wannier basis}. Let $H_\phi\subset L^2(X)$ be the closure of the linear span of the set $\{\phi_x:x\in D_0\}$, and let $p_\phi$ denote the  orthogonal projection onto $H_\phi$. We call this projection a {\it Wannier projection}.
\end{defn}

Given a $D_0$-compactly supported Wannier basis $\{\phi_x\}_{x\in D_0}$, let $U:l^2(D_0)\to L^2(X)$ be the isometry defined by $U(\delta_x)=\phi_x$, so that the range of $U$ is the subspace $H_\phi\subset L^2(X)$. We shall use also the isometry $U_H=U\otimes 1_H:l^2(D_0)\otimes H\to L^2(X)\otimes H$.

\begin{lem}
The formula $T\mapsto UTU^*$ (resp., $T\mapsto U_HTU^*_H$) defines a $*$-homomorphism $j_\mathbb C:C^*_\mathbb C(D_0)\to C^*_{\mathbb C}(X)$ (resp., $j_H:C^*_H(D_0)\to C^*_H(X)$).  

\end{lem}
\begin{proof}
The map $U$ can be written as $U(h)=\sum_{x\in D}\phi_xh(x)$, $h\in l^2(D_0)$. Let $f\in C_0(X)$, $g\in C_0(D_0)$ satisfy $d(\supp f,\supp g)>C$. Then 
$$
\psi_X(f)U\psi_{D_0}(g)(h)=\sum_{x\in D_0}f\phi_x g(x)h(x)=0
$$ 
when $C>2R$, where $R$ is the radius from Definition \ref{Wa}, hence $U$ covers the inclusion $D_0\subset X$, therefore, defines a $*$-homomorphism $C^*_\mathbb C(D_0)\to C^*_\mathbb C(X)$.

The same argument works for $j_H$.  
\end{proof}

Note that the map $j_\mathbb C$ can be written as $j_\mathbb C(T)=\sum_{x,y\in D_0}\phi_x\langle T_{xy}\phi_y,\cdot\rangle$, where $T_{xy}=\langle T\delta_x, \delta_y\rangle$ are the matrix entries of $T$. Note also that $C^*_{\mathbb C}(D_0)$ is unital, with the unit $1_{D_0}$, and $j_\mathbb C(1_{D_0})=p_\phi$ is the Wannier projection.

The above maps can be organized into the commutative diagram of $C^*$-algebras
\begin{equation}\label{ed2}
\begin{gathered}
\begin{xymatrix}{
C^*_{\mathbb C}(D_0)\ar[r]^-{j_\mathbb C}\ar[d]^-{i_{D_0}}&C^*_{\mathbb C}(X)\ar[d]^-{i_X}\\
C^*_H(D_0)\ar[r]^-{j_H}&C^*_H(X).
}
\end{xymatrix}
\end{gathered}
\end{equation}

Note that, by Zorn's Lemma, for the given uniformly discrete subspace $D_0\subset X$, we can find a uniformly discrete subspace $D\subset X$ such that $D_0\subset D$ and $D$ is coarsely equivalent to $X$ (the latter is equivalent to the requirement that $D$ is a $C$-net in $X$ for some $C>0$).
Such discrete subsets are often called Delone sets. Then we have the inclusions $\iota_\mathbb C:C^*_{\mathbb C}(D_0)\to C^*_{\mathbb C}( D)$ and $\iota_H:C^*_H(D_0)\to C^*_H( D)$ induced by the inclusion $D_0\subset D$, and passing to the $K$-theory groups, this can be organized into the following commutative diagram   
\begin{equation}\label{ed3}
\begin{gathered}
\begin{xymatrix}{
K_0(C^*_{\mathbb C}(D_0))\ar[r]^-{(\iota_{\mathbb C})_*}\ar[d]^-{(i_{D_0})_*}&K_0(C^*_{\mathbb C}(D))\ar[r]^-{(\jmath_\mathbb C)_*}\ar[d]^-{(i_D)_*}&K_0(C^*_{\mathbb C}(X))\ar[d]^-{(i_X)_*}\\
K_0(C^*_H(D_0))\ar[r]^-{(\iota_H)_*}&K_0(C^*_H(D))\ar[r]^-{(\jmath_H)_*}&K_0(C^*_H(X)),
}
\end{xymatrix}
\end{gathered}
\end{equation}
where the maps $\jmath_\mathbb C$ and $\jmath_H$ are defined in the same way as the maps $j_\mathbb C$ and $j_H$, and $\jmath_\mathbb C\circ \iota_\mathbb C=j_\mathbb C$, $\jmath_H\circ\iota_H=j_H$.


\begin{thm}\label{t1}
Let $X$ be a proper metric measure space with a non-atomic measure, and let $D_0\subset X$ be a uniformly discrete subspace. Let $p_\phi$ be a Wannier projection with a uniformly discrete set $D_0$ of localization centers and $D\subset X$ be a uniformly discrete subspace such that $D_0\subset D$ and $D$ is coarsely equivalent to $X$.
The following are equivalent:
\begin{itemize}
\item
$[p_\phi]=0$ in $K_0(C^*_{\mathbb C}(X))$;
\item
$[i_D(\iota_\mathbb C(1_{D_0}))]=0$ in $K_0(C^*_H(D))$.
\end{itemize}
\end{thm}

\begin{proof}
Recall that
$$[p_\phi]=(j_\mathbb C)_*[1_{D_0}]=(\jmath_\mathbb C)_*((\iota_\mathbb C)_*[1_{D_0}]).$$
Using commutativity of the diagram \eqref{ed3}, we get
$$(i_X)_*([p_\phi])=(i_X\circ \jmath_\mathbb C)_*((\iota_\mathbb C)_*[1_{D_0}])=(\jmath_H\circ i_D)_*((\iota_\mathbb C)_*[1_{D_0}])=(\jmath_H)_*[i_D(\iota_\mathbb C(1_{D_0}))].$$
As mentioned above, the map $(i_X)_*$ is an isomorphism. 
Coarse equivalence of $D$ and $X$ implies that $(\jmath_H)_*$ is an isomorphism as well. Therefore, we get 
\begin{equation*}
[p_\phi]=0 \Leftrightarrow (i_X)_*([p_\phi])=(\jmath_H)_*[i_D(\iota_\mathbb C(1_{D_0}))]=0\Leftrightarrow [i_D(\iota_\mathbb C(1_{D_0}))]=0. \tag*{\qedhere}
\end{equation*}
\end{proof}

Thus triviality of the $K$-theory class $[p_\phi]$ of $p_\phi$ is not related to $X$, but only to $D_0$ and a Delone set $D$.

\begin{remark} Recall that a {\it partial translation} is a bijection $f:A\to B$ between subsets $A,B\subset D$ such that $\sup_{x\in A}d(x,f(x))<\infty$. The space $D$ is {\it paradoxical} if there exist a decomposition $D=D_+\sqcup D_-$ and partial translations $f_\pm:D\to D_\pm$. Theorem 4.9 in \cite{Ara} shows that if $D$ is paradoxical then $[1]$ is zero already in $K_0(C^*_{\mathbb C}(D))$.  

\end{remark}

\begin{remark}
Theorem \ref{t1} is still true if we weaken the condition $\phi_x\phi_y=0$ for $x,y\in D$, $x\neq y$. Let $V:l^2(D)\to L^2(X)$ be defined by $V(\delta_x)=\phi_x$, $x\in D$, as before. Instead of requiring it to be an isometry we may require only that $V^*V$ is invertible. In this case $\{\phi_x\}_{x\in D}$ is not a basis for $H_\phi$, but only a frame. Let $V=WH$ be the polar decomposition. Then the range of $W$ is still $H_\phi$. The isometry $W$ does not cover the inclusion $D\subset X$, nevertheless the map $T\mapsto WTW^*$ still maps $C^*_{\mathbb C}(D)$ to $C^*_{\mathbb C}(X)$, and 
Theorem \ref{t1} holds.  
\end{remark}

\section{Geometric criterion for triviality of the $K$-theory class of a Wannier projection}\label{section3}

In this section, we prove a geometric criterion for triviality of the K-theory class of $[i_D(1_{D_0})]$ for a uniformly discrete metric space $D$ of bounded geometry and for any $D_0\subset D$, and, as a consequence, triviality of the $K$-theory class of a Wannier projection. We don't assume that $D$ is coarsely equivalent to $X$.

For $\alpha>0$ let $D(\alpha)$ be the graph whose vertices are points of $D$, and two vertices, $x,y\in D$ are connected by an edge whenever $d(x,y)\leq \alpha$.

\begin{thm}\label{geom-cr}
Let $D$ be a uniformly discrete metric space of bounded geometry. If there exists $\alpha>0$ such that the graph $D(\alpha)$ has no finite connected components, then for any $D_0\subset D$, we have
$(i_D)_*([1_{D_0}])=0$ (hence if $D$ is coarsely equivalent to $X$, then $[p_\phi]=0$ for any Wannier basis on $X$ with localization centers in $D_0$). Conversely, if $(i_D)_*([1_{D}])=0$, then there exists $\alpha>0$ such that the graph $D(\alpha)$ has no finite connected components. 
\end{thm}

The rest of this section is devoted to the proof of Theorem~\ref{geom-cr}. We start with the proof of the first part of the theorem.

\begin{defn}\label{0}
We say that a discrete metric space $D$ has a {\it ray structure} if there exists a family $\{D_i\}_{i\in I}$, of subsets of $D$ such that 
$D=\sqcup_{i\in I} D_i$,
and a family of bijective uniformly Lipschitz maps $\beta_i:\mathbb N\to D_i$, $i\in I$.
In this case we call the subsets $D_i$, $i\in I$, {\it rays}.

\end{defn}

Let $C_b(D)$ denote the commutative $C^*$-algebra of bounded functions on $D$. It is standardly included into $C^*_{\mathbb C}(D)$: a function $f\in C_b(D)$ is mapped to the diagonal operator $T\in C^*_{\mathbb C}(D)$ with diagonal entries $T_{xx}=f(x)$, $x\in D$. Denote the inclusion $C_b(D)\subset C^*_{\mathbb C}(D)$ by $\gamma$. It induces a map $\gamma_*:K_0(C_b(D))\to K_0(C^*_{\mathbb C}(D))$. Taking the composition of $\gamma_*$ with $(i_D)_*$, we get a map $(\gamma_H)_*=(i_D)_*\circ \gamma_* :K_0(C_b(D))\to K_0(C^*_H(D))$.

\begin{thm}\label{00}
Let $D$ be a uniformly discrete space of bounded geometry. If $D$ has a ray structure then $(\gamma_H)_*([p])=0$ for any projection $p\in M_n(C_b(D))$. In particular, $(i_D)_*([\iota_\mathbb C(1_{D_0})])=0$ for any $D_0\subset D$.
\end{thm}
\begin{proof}
Fix a basis in $H$, and let $p_k\in\mathbb K(H)$ be the projection onto the first $k$ vectors of the fixed basis of $H$. Recall that $K_0(C_b(D))$ is the group $C_b(D, \mathbb Z)$ of bounded $\mathbb Z$-valued functions on $D$. Indeed, for a  bounded $\mathbb N$-valued function $f$ on $D$, let the projection $p$ in $M_n(C_b(D))\cong C_b(D, M_n(\mathbb C))$ be given by $p(x)=p_{f(x)}, x\in D$. This defines a map $\pi : C_b(D, \mathbb Z)\to K_0(C_b(D))$, which is clearly injective. Given a projection $q$ in $M_n(C_b(D))$, set $g(x)=\rk(q(x)), x\in D$. Since $0\leq g(x)\leq n$, $g$ is a bounded $\mathbb N$-valued function on $D$. It is easy to see that $q$ can be pointwise diagonalized by some unitary in $M_n(C_b(D))$, giving rise to a unitary equivalence between $q$ and the projection $q^\prime$ in $M_n(C_b(D))$ given by $q^\prime(x)=p_{g(x)}, x\in D$. Therefore, $[q]=\pi(g)$, and the map $\pi$ is surjective. 

Thus, it suffices to prove the claim for the case when $p\in M_n(C_b(D))$ of the form $p(x)=p_{f(x)}, x\in D$, where $f$ is a bounded $\mathbb N$-valued function on $D$.

Let $D$ have a ray structure, and let $\beta_i:\mathbb N\to D$ be the maps as in Definition \ref{0}. There exists $C>1$ such that $d(\beta_i(k),\beta_i(l))<C|k-l|$ for any $i\in I$, $k,l\in\mathbb N$. Endow $Y=\mathbb N\times I$ with the following wedge metric: 
$$
d_Y((k,i),(l,i))=\left\lbrace\begin{array}{cl}|k-l|&\mbox{if\ }j=i;\\
k+l+d(\beta_i(1),\beta_j(1))&\mbox{if\ }j\neq i.\end{array}\right.
$$
Define the map $\beta:Y\to D$ by setting $\beta((k,i))=\beta_i(k)$. Then 
$$
d(\beta(k,i),\beta(l,i))=d(\beta_i(k),\beta_i(l))<C|k-l|=Cd_Y((k,i),(l,i)). 
$$
For $j\neq i$ we have 
\begin{eqnarray*}
d(\beta(k,i),\beta(l,j))&\leq& d(\beta_i(k),\beta_i(1))+d(\beta_i(1),\beta_j(1))+d(\beta_j(1),\beta_j(l))\\
&<&C(k+l-2)+d(\beta_i(1),\beta_j(1))\\
&<&C(k+l+d(\beta_i(1),\beta_j(1)))\\
&=&Cd_Y((k,i),(l,j)).
\end{eqnarray*}
Thus 
\begin{equation}\label{r1}
d_Y(y_1,y_2)> \frac{1}{C}d(\beta(y_1),\beta(y_2)) 
\end{equation}
for any $y_1,y_2\in Y$. As $\beta$ is a bijection, it induces an isomorphism $l^2(Y)\cong l^2(D)$, hence an isomorphism $\tilde\beta:\mathbb B(l^2(Y))\to\mathbb B(l^2(D))$. If $T\in \mathbb B(l^2(Y))$ has finite propagation then, by (\ref{r1}), $\tilde\beta(T)$ has finite propagation too. Thus, we have an injective $*$-homomorphism $\tilde\beta:C^*_\mathbb C(Y)\to C^*_\mathbb C(D)$, which allows us to consider the first $C^*$-algebra as a subalgebra of the second one. Similarly, we have an inclusion $\tilde\beta_H : C^*_H(Y)\subset C^*_H(D)$, also induced by $\beta$. Note that the first inclusion is unital. We have 
the commutative diagram
$$
\begin{xymatrix}{
K_0(C_b(Y)) \ar[r]^-{(\gamma_Y)_*}\ar[d]_-{\cong }& K_0(C^*_{\mathbb C}(Y))\ar[r]^-{(\iota_Y)_*}\ar[d]_-{\tilde\beta_*}&K_0(C^*_H(Y))\ar[d]^-{(\tilde\beta_H)_*}\\ K_0(C_b(D)) \ar[r]^-{(\gamma_D)_*}&
K_0(C^*_{\mathbb C}(D))\ar[r]^-{(\iota_D)_*}&K_0(C^*_H(D))
}\end{xymatrix}
$$

Let $p_Y\in M_n(C_b(Y))$ correspond to $p\in M_n(C_b(D))$ under the isomorphism $K_0(C_b(D))\cong K_0(C_b(Y))$.
If we show that  $$(\gamma_{Y,H})_*([p_Y]):=(\iota_Y)_*((\gamma_Y)_*([p_Y]))$$ is zero in $K_0(C^*_H(Y))$ then $(\gamma_H)_*([p])=(\iota_D)_*(\gamma_D([p]))$ will be zero in $K_0(C^*_H(D))$. 

The projection $p_Y$ has the form $p_Y(k,i)=p_{f_Y(k,i)}, (k,i)\in \mathbb N\times I,$ where $f_Y$ is a bounded $\mathbb N$-valued function on $Y$ given by $$f_Y(k,i)=f(\beta(k,i)), \quad (k,i)\in Y=\mathbb N\times I.$$ Then $(\gamma_Y)_*([p_Y])$ is the class of the projection $\gamma_Y(p_Y)\in M_n(C^*_{\mathbb C}(Y))$ given by 
$$
(\gamma_Y(p_Y))_{(k,i),(l,j)}=p_{f_Y(k,i)}\delta_{kl}\delta_{ij}\in M_n(\mathbb C),\quad (k,i),(l,j)\in Y.$$ 
Also, $(\gamma_{Y,H})_*([p_Y])$ is the class of the projection $\gamma_H(p_Y)\in C^*_H(Y)$ given by 
$$
(\gamma_H(p_Y))_{(k,i),(l,j)}=p_{f_Y(k,i)}\delta_{kl}\delta_{ij}\in \mathbb K(H),\quad (k,i),(l,j)\in Y.
$$ 

Set $$g(k,i)=\sum_{m=1}^k f_Y(m,i),\quad (k,i)\in \mathbb N\times I,$$ and define a projection $q\in M_n(C_b(Y))$ by $q(k,i)=p_{g(k,i)}$.   
We claim that 
\begin{equation}\label{equivK}
[\gamma_H(p_Y)\oplus \gamma_H(q)]=[\gamma_H(q)]
\end{equation}
in $K_0(C^*_H(Y))$. Since
$$
p_Y(k,i)+q(k,i)=p_{f_Y(k,i)}+p_{g(k,i)}\cong p_{g(k+1,i)}, \quad (k,i)\in\mathbb N\times I,
$$
the class $[p_Y+q]\in K_0(C_b(Y))$ equals the class of the projection $q'$ given by $$q'(k,i)=p_{g(k+1,i)}, \quad (k,i)\in\mathbb N\times I.$$ Define $T\in C^*_H(Y)$ by 
$$
T_{(k,i),(l,j)}=\left\lbrace\begin{array}{cl}p_{g(k,i)}\delta_{ij}&\mbox{if\ }k=l+1;\\0&\mbox{otherwise.}\end{array}\right.
$$ 
Then $TT^*=\gamma_H(p_Y)$, $T^*T=\gamma_H(p_Y\oplus q)$, and we obtain Murray--von Neumann equivalence between the projections $\gamma_H(p_Y)\oplus \gamma_H(q)$ and $\gamma_H(q)$, proving (\ref{equivK}). In its turn, (\ref{equivK}) implies $(\gamma_{Y,H})_*([p_Y])=[\gamma_H(p_Y)]=0$ that completes the proof of the first statement of the theorem.

For any $D_0\subset D$, it is easy to see that $\iota_\mathbb C(1_{D_0})=\gamma(\chi_{D_0})$, where $\chi_{D_0}\in C_b(D)$ is the indicator of $D_0$. Observe that $\chi_{D_0}$ is a projection in $C_b(D)$. It follows that 
$$(i_D)_*([\iota_\mathbb C(1_{D_0})])=(i_D)_*([\gamma(\chi_{D_0})])=(\gamma_H)_*([\chi_{D_0}])=0, $$
which proves the second statement of the theorem. 
\end{proof}

\begin{thm}\label{T2}
Let $D$ be a uniformly discrete metric space of bounded geometry. The following are equivalent:
\begin{enumerate}
\item
there exists $\alpha>0$ such that the graph $D(\alpha)$ has no finite connected components;
\item
there exists a discrete metric space $D'$ with a ray structure and an isometric inclusion $D\subset D'$ which is a coarse equivalence. 
\end{enumerate}
\end{thm}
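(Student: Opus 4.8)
We prove the two implications separately.

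\emph{(2) $\Rightarrow$ (1).} Suppose $D\subset D'$ is an isometric inclusion that is a coarse equivalence, and $D'=\sqcup_{i\in I}D'_i$ carries a ray structure with maps $\beta_i:\mathbb{N}\to D'_i$ of common Lipschitz constant $L$; let $C>0$ be such that every point of $D'$ lies within $C$ of $D$. I would put $\alpha=2C+L$ and show that $D(\alpha)$ has no finite component. Given $x\in D\subset D'$, write $x=\beta_i(n)$ for suitable $i,n$, and follow the forward ray $\beta_i(n),\beta_i(n+1),\dots$, whose consecutive terms are at distance $\le L$ and whose image is infinite, hence (as $D'$ is locally finite) unbounded. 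Choosing for each $m\ge n$ a point $y_m\in D$ with $d(\beta_i(m),y_m)\le C$ gives points of $D$ with $d(y_m,y_{m+1})\le 2C+L=\alpha$ and $d(x,y_n)\le C\le\alpha$, so $x$ and all the $y_m$ lie in one component of $D(\alpha)$. If the $y_m$ took only finitely many values, the unbounded family $\beta_i(m)$ would be confined to finitely many $C$-balls about points of $D$, contradicting bounded geometry; hence $\{y_m\}$ is infinite and the component of $x$ is infinite. As $x$ was arbitrary, (1) holds.

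\emph{(1) $\Rightarrow$ (2).} This is the substantial direction, and I would construct $D'$ component by component, uniformly. Fix $\alpha$ as in (1) and let $\Delta$ bound the degrees of $D(\alpha)$ (finite by bounded geometry). Let $C$ be a connected component; it is infinite, connected, of degree $\le\Delta$. Choose a spanning tree $T$ of $C$ rooted at $v_0$, whose edges are edges of $D(\alpha)$ and so have length $\le\alpha$. Call a vertex \emph{live} if its subtree is infinite and \emph{dead} otherwise. Live vertices are closed under taking parents and thus form a rooted subtree containing $v_0$; by König's lemma $v_0$ is live, and local finiteness forces every live vertex to have a live child. Selecting one live child of each live vertex as its ``heir'' decomposes the live vertices into vertex-disjoint rays, each run forward through heirs (infinite) and started at a vertex that is nobody's heir. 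Consecutive vertices of such a ray are joined by a tree edge, hence the step is $\le\alpha$.

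It remains to absorb the dead vertices, and this is the heart of the argument and the reason $D'$ must be larger than $D$. The dead vertices split into finite subtrees (``tassels''), each maximal one hanging by a single tree edge from a live vertex $w$; such a tassel cannot be extended to infinity inside the $C$-neighbourhood of $D$, so it must be woven into the live ray through $w$ as a finite detour. For a finite tassel $F$ I would use the standard doubling device: doubling the edges of $F$ makes every degree even, so $F$ has a closed traversal from its attaching point visiting every vertex, and introducing $\deg_F(v)\le\Delta$ copies of each vertex $v$ (placed in $D'$ at distance $\varepsilon<\alpha$ from $v$) turns this traversal into a simple path. Splicing it into the live ray at $w$—arriving at $w$, touring $F$, returning to a fresh copy of $w$, then continuing to the heir, and repeating over the $\le\Delta$ tassels at $w$—enlarges the ray while keeping it a bijection with steps $\le\max(\alpha,\varepsilon)=\alpha$. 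Carrying this out at every live vertex in every component yields $D'=D\sqcup\{\text{copies}\}$ with a partition into rays of uniform Lipschitz constant $\alpha$, i.e.\ a ray structure.

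Finally I would record the global properties. Each point of $D$ acquires at most $\Delta+1$ copies, so $D'$ is again uniformly discrete (separation $\min(\delta,\varepsilon)$, with $\delta$ that of $D$) and of bounded geometry; every copy lies within $\varepsilon$ of $D$, so $D\subset D'$ is coarsely dense; and the metric on $D'$ is chosen to restrict to that of $D$, making the inclusion isometric and a coarse equivalence. The main obstacle is exactly the bookkeeping for the dead tassels: one must verify that the doubling uses a number of copies and a step length bounded \emph{uniformly} over all components, which is what guarantees that the resulting maps $\beta_i$ share a single Lipschitz constant.
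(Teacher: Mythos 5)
Your proof is correct, and for the substantial direction (1)~$\Rightarrow$~(2) it takes a noticeably different, and in fact cleaner, combinatorial route than the paper. The paper invokes the Nash--Williams arboricity theorem to write $D(\alpha)$ as a union of at most $N$ subforests, which forces it to deal separately with finite trees inside infinite components and with edges shared between different forests (requiring an extra round of cloning at the end). You instead take a single spanning tree per (necessarily infinite) component, split its vertices into live and dead ones, and use the heir selection to partition the live part into vertex-disjoint rays with steps of length $\le\alpha$; this makes the Nash--Williams decomposition and the shared-edge cloning unnecessary. The treatment of the finite hanging subtrees is then the same in both arguments: an Eulerian-type closed traversal obtained by doubling edges, cloning each vertex according to its number of visits, and splicing the resulting simple detour into the ambient ray --- exactly the paper's construction of the path $\lambda$ and the graph $T'_f$. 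For (2)~$\Rightarrow$~(1) you argue directly (every component is infinite because a forward ray escapes to infinity and is shadowed by points of $D$), where the paper argues by contradiction with a finite $\alpha$-separated subset; these are the same idea. Two small points to tidy up: your step ``image infinite, hence unbounded'' uses local finiteness of $D'$, which is not literally in the hypotheses of (2) (the paper's own argument makes the same tacit assumption, so you are no worse off, but it is worth stating that $D'$ is taken uniformly discrete of bounded geometry, as the $D'$ you construct indeed is); and the phrase ``contradicting bounded geometry'' should be ``contradicting unboundedness of the ray,'' since finitely many $C$-balls form a bounded set.
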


\begin{proof}
First we prove that (1) implies (2). Bounded geometry of $D$ implies existence of some $N\in\mathbb N$ such that each vertex in $D(\alpha)$ has no more than $N$ neighbors. If $X\subset D(\alpha)$ is a finite subset then the number $|E_X|$ of edges that begin and end in $X$ is bounded by $N|X|$. The Nash-Williams Theorem (\cite{Nash-W}, Theorem B) claims that if $|E_X|\leq k(|X|-1)$ for any finite $X$ then the graph $D(\alpha)$ is a union of not more than $k$ subforests. We can take $k=2N$, then $|E_X|\leq 2N(|X|-1)$, so $D(\alpha)$ is the union of not more than $2N$ subforests. Let $F$ be one of these subforests, and let $\Gamma\subset D(\alpha)$ be a connected component. Suppose that the subforest $F\cap\Gamma$ contains two disjoint trees, $T_1$ and $T_2$, then there is a path connecting a vertex of $T_1$ with a vertex of $T_2$. This path can meet $T_1$ and $T_2$ at several vertices, but it contains a sub-path $S$ with the property that one end lies in $T_1$, another end lies in $T_2$ and the sub-path meets no other points of these two trees. Then $T_1\cup S\cup T_2$ is a tree. Adding such sub-paths to the subforests we may assume that, for each subforest $F$ and each connected component $\Gamma$, $F\cap\Gamma$ is a tree. Thus, any component $\Gamma$ of $D(\alpha)$ is the union of no more than $2N$ trees.

We assume that $D(\alpha)$ satisfies the condition (1) and will construct $D'$ and the inclusion $D\subset D'$ component-wise. Let $\Gamma\subset D(\alpha)$ be a connected component, which is infinite by assumption.   

Consider first the case when the whole $\Gamma=T$ is one tree. Choose a root in $T$. Recall that a leaf (or a dead end) is a vertex, different from the root, of degree 1. The simplest case is when $T$ is an infinite tree without dead ends, i.e.\ when each finite simple path from the root to any vertex can be extended to an infinite simple path. Choose an infinite simple path (a ray) starting at the root, and denote it by $T_1$. Let $T\setminus T_1$ be the forest obtained from $T$ by removing  the vertices of $T_1$ and all the edges incident with them. It contains finitely many trees with roots at the minimal distance from the root of $T$. Choose infinite simple paths in each new tree starting at the roots as above. They give rays $T_2,\ldots,T_{m_1}$. Moving further from the root of $T$, we obtain, by induction, a decomposition $T=\sqcup_{j\in J}T_j$, where each $T_j$ is coarsely equivalent to $\mathbb N$, i.e.\ a tree without dead ends already has a ray structure. In this case we do not add any new points to $\Gamma$ and set $\Gamma'=\Gamma$.    

Now consider the case when $T$ is an infinite tree with dead ends. First suppose that the tree $T$ has the form $T=R\cup T_f$, where $R$ is an infinite ray with vertices $x_0,x_1,\ldots,x_n,x_{n+1},\ldots$, $T_f$ is a finite tree with the root $y_0=x_n=R\cap T_f$. As the number of neighbors of any vertex does not exceed $N$, there is a path $\lambda$ in $T_f$ that starts and ends at $y_0$ and passes through each vertex of $T_f$ not more than $2N$ times. Let $n_y$ be the number of times that the path $\lambda$ passes through the vertex $y\in T_f$. Each time $\lambda$ returns to the vertex $y\in T_f$ we add a new vertex $y^i$ to $T_f$, $i=2,\ldots,n_y$, so the new set $T'_f$ contains, with each $y^1=y\in T_f$, the vertices $y^2,\ldots,y^{n_y}$. In particular, the root $y_0$ is duplicated, and we get $y_0^1=y_0=x_n$ and $y_0^2$.
Set $T'=T\cup T'_f$ and define a metric $d'$ on $T'$ such that $d'|_T=d$. Recall that $D$ is uniformly discrete, hence there exists $\varepsilon>0$ such that $d(x,y)\geq\varepsilon$ for any $x\neq y\in D$.   
For $x\notin T'_f$, $y^i\in T'_f$ with $i>1$ set $d'(x,y^i)=d(x,y)$, for $y^i,z^j\in T'_f$ with $i,j>1$ and $y\neq z$ set $d'(y^i,z^j)=d(y,z)$, and, finally, for $y^i,y^j\in T'_f$ with $i\neq j$ set $d'(y^i,y^j)=\varepsilon$. The triangle inequality holds, so this is a metric, and we have an isometric inclusion of $T$ in $T'$. Moreover, each newly added point is $\varepsilon$-close to some point of $T$, hence $T\subset T'$ is a coarse equivalence.
After we have added the new points, the path $\lambda$ can be considered as a simple path $\lambda'$, i.e.\ passing through each vertex of $T'_f$ only once. Write $\lambda'$ as $(y_0^1,z_1,\ldots,z_m,y_0^2)$. Let the map $\beta:\mathbb N\to T'$ be given by the sequence $(x_0,x_1,\ldots,x_{n-1},y_0^1,z_1,\ldots,z_m,y_0^2,x_{n+1},\ldots)$. Abusing the notation, we may say that by adding new points, we have replaced the path $(x_0,x_1,\ldots,x_{n-1},\lambda,x_{n+1},\ldots)$ with some vertices repeated by the path $(x_0,x_1,\ldots,x_{n-1},\lambda',x_{n+1},\ldots)$ without repetitions. The map $\beta$ clearly satisfies Definition \ref{0}. 

In the general case,  the tree $T$ can be presented as a union of a tree $T_0$ without dead ends and finite trees with roots being vertices of $T_0$. Since the degree $K$ of $D(\alpha)$ is finite, $T_0$ is infinite and each vertex of $T_0$ is a root for at most $K$ finite trees. 
The tree $T_0$ has a ray structure, hence it can be considered as the union of rays. Let $R$ be one of these rays, and let $T_f^i$, $i\in\mathbb N$, be finite trees with roots $x_{n_1}, x_{n_2},\ldots$ in $R$. Then we can apply the above procedure to each finite subtree of $R\cup T_f^1\cup T_f^2\cup\cdots$, one after one, to obtain a ray $x_0,x_1,\ldots,x_{n_1-1},z^1_0,\ldots,z^1_{m_1},x_{n_1+1},\ldots,x_{n_2-2},z^2_0,\ldots,z^2_{m_2},x_{n_2+1},\ldots$, where $z^i_j$, $j=1,\ldots,m_j$, is the list of vertices of the modified tree $(T^i_f)'$. The same should be done for each ray of $T$. This gives us a new set $T'$ of vertices with the ray structure, coarsely equivalent to $T$.  

If $\Gamma$ is a union of a finite number of trees, $\Gamma=\cup_{i=1}^M T_i$, then we apply the above procedure to each tree and obtain $\Gamma'=\cup_{i=1}^M T'_i$ coarsely equivalent to $\Gamma$ with the ray structure. Here we meet the following problem: some of the trees $T_i$ may share  common edges (Nash-Williams Theorem does not assert that the forests are disjoint). So, let us consider the case when several rays share some edges. Each such edge can belong to at most $2N$ rays. Suppose that an edge $[x,y]$ belongs to the trees $T_1,\ldots,T_n$. Add to the vertices $x=x^1$ and $y=y^1$ new vertices $x^2,\ldots,x^n$ and $y^2,\ldots,y^n$, and replace the edge $[x,y]$ by the edges $[x^i,y^i]$, $i=1,\ldots,n$, so that the new trees $T'_i$ are obtained from $T_i$ by replacing $x,y$ and $[x,y]$ by $x^i,y^i$ and $[x^i,y^i]$, respectively. Then the trees $T'_i$ do not share the edge $[x,y]$. The same procedure can be done for every edge that is shared by several trees. The metric on $\Gamma$ can be extended to that on $\Gamma'=\cup T'_i$ by setting $d'(x^i,z)=d(x^i,z)$ for points $z$ belonging to a single tree, $d'(x^i,x^j)=\varepsilon$ for $i\neq j$, and $d'(x^i,y^j)=d(x,y)$ when $x\neq y$.
Once again, $\Gamma'$ has a ray structure and $\Gamma\subset\Gamma'$ is a coarse equivalence.

Doing the same for each component $\Gamma$ of $D(\alpha)$, we obtain $D'$ with the required properties.

Now we show that (2) implies (1). Suppose that there exists $D'$ as in (2), and (1) does not hold, i.e.\ for any $\alpha>0$ there exists a finite component of $D(\alpha)$. Then there exists $C>0$ such that (a) for any $x\in D'$ there exists $y\in D$ with $d(x,y)<C$, and (b) $D'=\sqcup_{i\in I}D'_i$ and for each $i\in I$ there exists a bijective map $\beta_i:\mathbb N\to D'_i$ with $d(\beta_i(k+1),\beta_i(k))<C$ for any $k\in\mathbb N$. Take $\alpha>3C$, and let $F\subset D$ be a finite subset such that $d(F,D\setminus F)>3C$. Then $d(F,D'\setminus F)>2C$. 
Consider the finite set $F\cap D'_i$. Let $\{k_1,\ldots,k_n\}=\beta_i^{-1}(F\cap D'_i)$. Set $x=\beta_i(k_n)$, $y=\beta_i(k_n+1)$, then $x\in F$, $y\in D'\setminus F$, and $d(x,y)<C$, which contradicts $d(F,D'\setminus F)>2C$. 
\end{proof}

We can complete the proof of the first part of Theorem~\ref{geom-cr}.

\begin{cor}\label{corT2}
Let $D$ be a uniformly discrete space of bounded geometry satisfying either of the conditions of Theorem \ref{T2}. Then $(i_D)_*([1_{D_0}])=0$ for any $D_0\subset D$.

\end{cor}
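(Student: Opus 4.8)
\emph{Proof proposal.} The plan is to reduce to the situation already handled by the ray-structure criterion, Theorem~\ref{0}. By Theorem~\ref{T2}, the two conditions are equivalent, so I may assume condition~(2): there is a discrete metric space $D'$ carrying a ray structure, together with an isometric inclusion $D\subset D'$ that is a coarse equivalence. The idea is to transport the class $(i_D)_*([1])$ forward along this inclusion into $K_0(C^*_H(D'))$, kill it there using the ray structure of $D'$, and then pull the conclusion back using coarse invariance of Roe $K$-theory.

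Concretely, let $U\colon l^2(D)\to l^2(D')$ be the isometry $U(\delta_x)=\delta_x$ induced by $D\subset D'$, and $U_H=U\otimes 1_H$. As in Section~\ref{section2}, these cover the inclusion and yield $*$-homomorphisms $j_{\mathbb C}\colon C^*_{\mathbb C}(D)\to C^*_{\mathbb C}(D')$ and $j_H\colon C^*_H(D)\to C^*_H(D')$ fitting into a commutative square exactly like (\ref{ed2}), with vertical maps $i_D$ and $i_{D'}$. The key elementary computation is that $j_{\mathbb C}(1)=UU^*=\chi_D$, the characteristic function of $D\subset D'$ regarded as a multiplication operator; equivalently, $j_H(i_D(1))=i_{D'}(\chi_D)$ is the projection of $l^2(D')\otimes H$ onto $l^2(D)\otimes V(\mathbb C)$. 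Since $D\subset D'$ is a coarse equivalence, $(j_H)_*$ is an isomorphism, just as in the proof of Theorem~\ref{t1}.

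Now I would apply Theorem~\ref{0} to $D'$. Because $\chi_D$ is a projection already present in $C_b(D')$, its class in $K_0(C^*_{\mathbb C}(D'))$ lies in the image of $\gamma$; hence $(i_{D'})_*([\chi_D])=0$. By commutativity of the square, $(j_H)_*\big((i_D)_*([1])\big)=(i_{D'})_*\big((j_{\mathbb C})_*([1])\big)=(i_{D'})_*([\chi_D])=0$, and since $(j_H)_*$ is injective this forces $(i_D)_*([1])=0$, as required.

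The point to watch — and the reason Theorem~\ref{0} is stated for all of $\gamma(K_0(C_b(D)))$ rather than merely for the unit — is that $j_{\mathbb C}$ is not unital: it sends $1$ to the proper subprojection $\chi_D$ of $1_{D'}$, so knowing only $(i_{D'})_*([1_{D'}])=0$ would not suffice, since a subprojection of a $K$-theoretically trivial projection need not itself be trivial. Invoking the stronger conclusion of Theorem~\ref{0} for the idempotent $\chi_D$ circumvents this, and the only other ingredient is coarse invariance of the $K$-theory of the Roe algebra, which supplies the isomorphism $(j_H)_*$. I expect no serious obstacle beyond correctly identifying $j_{\mathbb C}(1)$ with $\chi_D$ and checking that $[\chi_D]$ genuinely lies in the image of $\gamma$.
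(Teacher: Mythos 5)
Your argument is correct and is essentially the paper's own proof: both push $[1]$ forward along the inclusion $D\subset D'$ to the class of $\chi_D\in C_b(D')$, kill it via Theorem~\ref{0} applied to the ray structure on $D'$, and pull back using the isomorphism on Roe-algebra $K$-theory induced by the coarse equivalence. Your explicit identification of $\iota_{\mathbb C}(1)$ with $\chi_D$ and the remark on why the full strength of Theorem~\ref{0} (for all of $\gamma(K_0(C_b(D')))$, not just the unit) is needed are accurate clarifications of steps the paper leaves implicit.
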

\begin{proof}
Let $D'$ be as in Theorem \ref{T2}, and let $\iota:D\to D'$ be the corresponding inclusion. Then $\iota$ induces the maps $\iota_\mathbb C:C^*_{\mathbb C}(D)\to C^*_{\mathbb C}(D')$ and $\iota_H:C^*_H(D)\to C^*_H(D')$. Set $s=\iota_\mathbb C(1_{D_0})\in C^*_{\mathbb C}(D')$. As $\iota$ is a coarse equivalence, the lower horizontal map in the commuting diagram
$$
\begin{xymatrix}{
K_0(C^*_{\mathbb C}(D))\ar[r]^-{(\iota_\mathbb C)_*}\ar[d]_-{(i_D)_*}&K_0(C^*_{\mathbb C}(D'))\ar[d]^-{(i_{D'})_*}\\
K_0(C^*_H(D))\ar[r]^-{(\iota_H)_*}&K_0(C^*_H(D'))
}\end{xymatrix}
$$
is an isomorphism (cf. \cite{Roe}, Lemma 3.5). By Theorem \ref{00}, $(i_{D'})_*(s)=0$, hence $(i_D)_*([1_{D_0}])=0$.
\end{proof}

Now we prove  the second part of Theorem~\ref{geom-cr}.

\begin{lem}\label{box}
Suppose that $D(\alpha)$ has finite components for any $\alpha>0$. Then $(i_D)_*([1_D])\neq 0$.

\end{lem}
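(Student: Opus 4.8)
The plan is to complete the dichotomy begun in Theorem \ref{T2} by exhibiting a $K$-theory functional on $C^*_H(D)$ that does not vanish on $(i_D)_*([1])$. The hypothesis furnishes a finite connected component of $D(\alpha)$ at \emph{every} scale $\alpha$, and such a component is separated from the rest of $D$ by a distance exceeding $\alpha$; the idea is to compress operators to these components and pass to a suitable quotient, where the large separation becomes exact multiplicativity. Concretely, I would first fix, for each $k\in\mathbb{N}$, a finite connected component $A_k$ of the graph $D(k)$, whose existence is exactly the standing hypothesis. By definition of a connected component, every point of $D\setminus A_k$ lies at distance $>k$ from $A_k$, so $d(A_k,D\setminus A_k)>k$; note that neither disjointness nor ``escape to infinity'' of the $A_k$ is needed, only the growth of these separations.

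Next I would define a $*$-homomorphism
\[
\bar\Phi:C^*_H(D)\longrightarrow Q:=\prod_{k}\mathbb{K}(l^2(A_k)\otimes H)\Big/\bigoplus_{k}\mathbb{K}(l^2(A_k)\otimes H),\qquad \bar\Phi(T)=\big[(\chi_{A_k}T\chi_{A_k})_k\big],
\]
where $\chi_{A_k}$ is the projection onto $l^2(A_k)\otimes H$. For an operator $T$ of propagation $\le R$ one has $\chi_{A_k}T=T\chi_{A_k}=\chi_{A_k}T\chi_{A_k}$ whenever $k>R$, since the matrix entries of $T$ across the $>k$-gap around $A_k$ vanish. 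Hence for finite-propagation $T,S$ the coordinates with $k$ larger than both propagations are exactly multiplicative, while the finitely many remaining coordinates are killed in the quotient; thus $\bar\Phi$ is multiplicative on the dense $*$-subalgebra of finite-propagation locally compact operators. As $\bar\Phi$ is clearly $*$-preserving and contractive (each coordinate has norm $\le\|T\|$, and each $\chi_{A_k}T\chi_{A_k}$ is compact because $T$ is locally compact and $A_k$ is finite), it extends to a $*$-homomorphism on all of $C^*_H(D)$.

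Finally I would evaluate on the unit. Since $i_D(1)=\id_{l^2(D)}\otimes e$ with $e=VV^*$ a rank-one projection on $H$, its compression is $\chi_{A_k}\,i_D(1)\,\chi_{A_k}=\id_{l^2(A_k)}\otimes e$, a projection of rank $|A_k|$. Therefore $\bar\Phi\big(i_D(1)\big)$ is represented by the genuine projection $P=(\id_{l^2(A_k)}\otimes e)_k\in\prod_k\mathbb{K}(l^2(A_k)\otimes H)$, and applying the coordinatewise rank homomorphism $K_0\big(\prod_k\mathbb{K}\big)\to\prod_k\mathbb{Z}$ followed by reduction modulo $\bigoplus_k\mathbb{Z}$ sends $\bar\Phi_*\,(i_D)_*([1])$ to the class of $(|A_k|)_k$ in $\prod_k\mathbb{Z}\big/\bigoplus_k\mathbb{Z}$. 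As $|A_k|\ge1$ for every $k$, this sequence is not eventually zero, so its class is non-zero, and hence $(i_D)_*([1])\neq0$.

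The main obstacle is the verification in the second paragraph that $\bar\Phi$ is a genuine $*$-homomorphism into the quotient: the separation estimate $d(A_k,D\setminus A_k)>k$ is what upgrades the asymptotic commutation $\chi_{A_k}T\approx T\chi_{A_k}$ into exact multiplicativity for large $k$, and the role of the ideal $\bigoplus_k\mathbb{K}$ is precisely to absorb the finitely many low-scale coordinates where this fails; one must also confirm that $\bar\Phi$ extends continuously from finite-propagation operators to the whole Roe algebra. The $K_0$-bookkeeping is then routine, but it must be arranged so that the lift $P$ is an honest projection, which is why recording ranks is the correct invariant here --- a normalized trace would be ill-defined on the infinite-multiplicity algebra $C^*_H(D)$.
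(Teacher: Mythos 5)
Your proof is correct and follows essentially the same route as the paper's: both arguments compress to a finite component of $D(k)$ at each scale $k$ (your $A_k$, the paper's $F_n$), use the separation $d(A_k,D\setminus A_k)>k$ to obtain multiplicativity modulo the ideal $\oplus_k\mathbb K(l^2(A_k)\otimes H)$, and detect non-vanishing of the unit through the rank sequence $(|A_k|)_k$ in $\prod_k\mathbb Z/\oplus_k\mathbb Z$. Your packaging via the explicit $*$-homomorphism $\bar\Phi$ into the corona-type quotient is a slightly more self-contained version of the paper's short-exact-sequence and sandwich argument; the only point to spell out is that the rank invariant descends from $K_0\bigl(\prod_k\mathbb K\bigr)$ to $K_0$ of the quotient (e.g.\ by exactness together with $K_1\bigl(\oplus_k\mathbb K\bigr)=0$, or by lifting projections coordinatewise), since your class a priori lives in the quotient rather than in the product.
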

\begin{proof}
First, we show that there exists a sequence $\{F_n\}_{n\in\mathbb N}$ of mutually disjoint finite subsets of $D$ such that 
$$d(F_{j+1},D\setminus F_{j+1})>2d(F_j,D\setminus F_j), \quad j=1,2,\ldots.$$
In particular, $\lim_{n\to \infty }d(F_n,D\setminus F_n)=\infty$. Indeed, let $F_1$ be a finite component of $D(1)$. Denote 
\[
d(F_1, D\setminus F_1)=\min_{x\in F_1} d(x, D\setminus F_1) = \alpha_1<\infty.
\]
Let $\Gamma_1$ be the component of $D(\alpha_1)$, which contains $F_1$. We claim that $D(\alpha_1)\setminus \Gamma_1$ contains a finite component, which will be denoted by $F_2$. Assume the contrary. Then $D(\alpha)\setminus \Gamma_1$ contain no finite components for any $\alpha\geq \alpha_1$. On the other hand, by assumption, for any $\alpha\geq \alpha_1$, $D(\alpha)$ has a finite component $F(\alpha)$. Then $(D(\alpha)\setminus \Gamma_1)\cap F(\alpha)$ is a finite component of $D(\alpha)\setminus \Gamma_1$ if it is nonempty. Therefore, $(D(\alpha)\setminus \Gamma_1)\cap F(\alpha)=\emptyset$ and $F(\alpha)= \Gamma_1$. Thus, we have $d(\Gamma_1, D\setminus \Gamma_1)>\alpha$ for any $\alpha>\alpha_1$, which gives a contradiction.
 
 Now we proceed by induction. Suppose that we have mutually disjoint finite subsets $F_j, j=1,\ldots,n$ of $D$ such that $$d(F_{j+1},D\setminus F_{j+1})>2d(F_j,D\setminus F_j), \quad j=1,\ldots,n-1.$$ Take $\alpha_n>0$ such that  
 $$2d(F_n,D\setminus F_n)<\alpha_n, \quad d(F_j, F_k)<\alpha_n,\quad j,k=1,\ldots,n.$$   
 
Let $\Gamma_{n}$ be the component of $D(\alpha_{n})$, which contains $F_j, j=1,\ldots,n$. We claim that $D(\alpha_n)\setminus \Gamma_{n}$ contains a finite component, which will be denoted by $F_{n+1}$. Assume the contrary. Then $D(\alpha)\setminus \Gamma_{n}$ contain no finite components for any $\alpha\geq \alpha_n$. On the other hand, by assumption, $D(\alpha)$ has a finite component $F(\alpha)$. Then $(D(\alpha)\setminus \Gamma_{n})\cap F(\alpha)$ is a finite component of $D(\alpha)\setminus \Gamma_{n}$. Therefore, $(D(\alpha)\setminus \Gamma_{n})\cap F(\alpha)=\emptyset$ and $F(\alpha)= \Gamma_{n}$. Thus, we have $d(\Gamma_{n-1}, D\setminus \Gamma_{n-1})>\alpha$ for any $\alpha>\alpha_n$, which gives a contradiction.

Note that $d(F_{n+1},D\setminus F_{n+1})>\alpha_n>2d(F_n,D\setminus F_n)$. Therefore, we obtain mutually disjoint finite subsets $F_j, j=1,\ldots,n+1$ of $D$ such that $$d(F_{j+1},D\setminus F_{j+1})>2d(F_j,D\setminus F_j), \quad j=1,\ldots,n.$$ 

Let $F=\sqcup_{n\in\mathbb N}F_n$ and $E=D\setminus F$. Then $l^2(D)=l^2(E)\oplus l^2(F)$. Let $P_E$, $P_F$ denote the projections onto $l^2(E)$ and $l^2(F)$, respectively. For $T\in\mathbb B(l^2(D))$ of finite propagation $L$, $P_ET|_{l^2(F)}$ and $P_FT|_{l^2(E)}$ are of finite rank as $F$ has only finitely many points at the distance less than $L$ to the set $E$, therefore, $T$ is block-diagonal modulo compacts, $T-(P_ET|_{l^2(E)}+P_FT|_{l^2(F)})\in\mathbb K(l^2(D))$, and both diagonal blocks have finite propagation. Similarly, if $T\in C^*_H(D)\subset\mathbb B(l^2(D)\otimes H)$ then $T$ is block-diagonal modulo compacts, with blocks in $C^*_H(E)$ and $C^*_H(F)$.    
Thus, we get the quotient map
$$
\begin{xymatrix}{
q:C^*_H(D)\ar[r]&C^*_H(D)/\mathbb K(l^2(D)\otimes H)\cong\frac{C^*_H(E)}{\mathbb K(l^2(E)\otimes H)}\oplus \frac{C^*_H(F)}{\mathbb K(l^2(F)\otimes H)},
}\end{xymatrix}
$$
which induces a map 
$$
\begin{xymatrix}{
q_*:K_0(C^*_H(D))\ar[r]&K_0\Bigl(\frac{C^*_H(E)}{\mathbb K(l^2(E)\otimes H)}\Bigr)\oplus K_0\Bigl(\frac{C^*_H(F)}{\mathbb K(l^2(F)\otimes H)}\Bigr).
}\end{xymatrix}
$$
Consider its composition with the projection onto the second summand:
\begin{equation}\label{K-}
\begin{xymatrix}{
K_0(C^*_H(D))\ar[r]&K_0\Bigl(\frac{C^*_H(E)}{\mathbb K(l^2(E)\otimes H)}\Bigr)\oplus K_0\Bigl(\frac{C^*_H(F)}{\mathbb K(l^2(F)\otimes H)}\Bigr)\ar[r]&K_0\Bigl(\frac{C^*_H(F)}{\mathbb K(l^2(F)\otimes H)}\Bigr).
}\end{xymatrix}
\end{equation}
As $i_D(1_D)=i_E(1_E)\oplus i_F(1_F)$, it suffices to show that the image of $(i_F)_*([1_F])$ under the map (\ref{K-}) in $K_0\Bigl(\frac{C^*_H(F)}{\mathbb K(l^2(F)\otimes H)}\Bigr)$ is non-zero. 

As $F=\sqcup_{n\in\mathbb N}F_n$, we have $l^2(F)=\oplus_{n\in\mathbb N}l^2(F_n)$. If $a_n\in \mathbb K(l^2(F_n))$, $\sup_{n\in\mathbb N}\|a_n\|<\infty$ then we can consider the sequence $a=(a_n)_{n\in\mathbb N}$ as a bounded operator on $l^2(F)$: if $\xi=(\xi_n)_{n\in\mathbb N}\in \oplus_{n\in\mathbb N}l^2(F_n)$ then $a\xi=(a_1\xi_1,a_2\xi_2,\ldots)$. So we can view $\prod_{n\in\mathbb N}\mathbb K(l^2(F_n))$ as a subalgebra of $\mathbb B(l^2(F))$. Similarly, $\prod_{n\in\mathbb N}\mathbb K(l^2(F_n)\otimes H)$ is a subalgebra of $\mathbb B(l^2(F)\otimes H)$.

Set $A=\prod_{n\in\mathbb N}\mathbb K(l^2(F_n)\otimes H)+\mathbb K(l^2(F)\otimes H)\subset\mathbb B(l^2(F)\otimes H)$. We claim that
\begin{equation}\label{A}
C^*_H(F)\subset A.
\end{equation}

Let $P_{F_n}, n\in\mathbb N,$ denote the projection onto $l^2(F_n)$. It is given by multiplication by the indicator function $\chi_{F_n}$ of $F_n$. Then, for any $T\in C^*_H(F)$, since $T$ is bounded and locally compact, the operator $\sum_{n=1}^\infty P_{F_n}TP_{F_n}$ belongs to $\prod_{n\in\mathbb N}\mathbb K(l^2(F_n)\otimes H)\subset A$. On the other hand, we have
\[
T-\sum_{n=1}^\infty P_{F_n}TP_{F_n}=\sum_{n=1}^\infty \chi_{F_n}T\chi_{F\setminus F_n}. 
\]
Since $T$ has finite propagation, there exists some $R>0$ such that $fTg=0$ whenever the distance between the supports of $f,g\in C_0(X)$ is greater than $R$. Since $\lim_{n\to \infty }d(F_n,D\setminus F_n)=\infty$, there exists $N\in \mathbb N$ such that for any $n>N$ we have $d(F_n,D\setminus F_n)>R$ and, therefore, $\chi_{F_n}T\chi_{F\setminus F_n}=0$. Since $T$ is locally compact, we get that
\[
T-\sum_{n=1}^\infty P_{F_n}TP_{F_n}=\sum_{n=1}^N \chi_{F_n}T\chi_{F\setminus F_n}\in \mathbb K(l^2(F)\otimes H),
\] 
that completes the proof of \eqref{A}. (Note that the inclusion \eqref{A} need not be an equality when the sequence of diameters of $F_n$'s is unbounded.)

Denote the inclusion (\ref{A}) by $\kappa$. The operator $i_F(1)\in C^*_H(F)$ is given by   
$$
i_F(1_F)=1_F\otimes e,
$$
where $e\in\mathbb K(H)$ is a rank one projection. Its image under $\kappa$ lies in $\prod_{n\in\mathbb N}\mathbb K(l^2(F_n)\otimes H)$ and equals 
$$
\kappa(i_F(1_F))=(1_{\mathbb K(l^2(F_1))}\otimes e,1_{\mathbb K(l^2(F_2))}\otimes e,\ldots)\in \prod_{n\in\mathbb N}\mathbb K(l^2(F_n)\otimes H)\subset A.
$$

Consider the quotient map
\[
q_A : A\to A/\mathbb K(l^2(F)\otimes H)\cong \prod_{n\in\mathbb N}\mathbb K/\oplus_{n\in\mathbb N}\mathbb K,
\]
where $\mathbb K$ denotes the $C^*$-algebra of compact operators. 

An easy calculation shown that $K_0(\prod_{n\in\mathbb N}\mathbb K/\oplus_{n\in\mathbb N}\mathbb K)$ is isomorphic to the quotient of the group of all integer-valued sequences by the subgroup of sequences of finite support. We get that
$$
(q_A\circ\kappa)_*([i_F(1_F)])=[q_A\circ\kappa(i_F(1_F))]=([1_{\mathbb K(l^2(F_1))}\otimes e],[1_{\mathbb K(l^2(F_2))}\otimes e],\ldots).
$$
is given by the class of the sequence $(1,1,1,\ldots)$. As this sequence is nowhere zero, it represents a non-zero class in $K_0(\prod_{n\in\mathbb N}\mathbb K/\oplus_{n\in\mathbb N}\mathbb K)$, hence $[i_F(1)]\neq 0$ in $K_0(C^*_H(F))$. 
\end{proof}

\section{Triviality of the $K$-theory class of Wannier projections}\label{Ch5}

In this section, we prove triviality of the $K$-theory class of any Wannier projection with a uniformly discrete set $D$ of localization centers on a connected proper measure space of bounded geometry, the main result of the paper. 

First. we show that the equivalent properties from Theorem \ref{T2} are coarsely invariant. 
\begin{lem}\label{coarse-equiv}
Let $D_1,D_2\subset X$ be two uniformly discrete subsets of bounded geometry such that these inclusions are coarse equivalences. If $D_1$ satisfies the property (1) of Theorem \ref{T2} then $D_2$ satisfies it too. 

\end{lem}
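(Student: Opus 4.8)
The plan is to prove that property (1) of Theorem \ref{T2} is invariant under coarse equivalence by exploiting the coarse equivalence between $D_1$ and $D_2$ directly, rather than routing through the graph structures. Since $D_1, D_2 \subset X$ are both coarsely equivalent to $X$, they are coarsely equivalent to each other, so there exists a coarse equivalence $\theta\colon D_1 \to D_2$ together with a constant $C>0$ controlling the distortion in both directions. First I would fix $\alpha_1>0$ witnessing property (1) for $D_1$, i.e. so that the graph $D_1(\alpha_1)$ has no finite connected components, and I would choose $\alpha_2$ large enough (depending on $\alpha_1$ and on the coarse distortion constant $C$) that the map $\theta$ carries any $\alpha_1$-path in $D_1$ to an $\alpha_2$-path in $D_2$.

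The key step is the following: I claim that $D_2(\alpha_2)$ has no finite connected components for this choice of $\alpha_2$. The idea is that each connected component of $D_1(\alpha_1)$ is infinite, and a coarse equivalence must send an infinite, coarsely connected set to an infinite, coarsely connected set, because a coarse map does not collapse infinite sets to finite ones (the preimages of points have uniformly bounded cardinality by bounded geometry, and the map is coarsely surjective). Concretely, given a point $y \in D_2$, I would pick $x \in D_1$ with $d(\theta(x), y)$ small, then follow an infinite $\alpha_1$-path in $D_1$ emanating from $x$ (which exists because its component is infinite and $D_1$ has bounded geometry, so the component cannot be a finite graph and must contain arbitrarily long, hence infinite, paths escaping every bounded set). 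Transporting this path via $\theta$ yields an infinite $\alpha_2$-path in $D_2$ passing near $y$; adding the short edge from $y$ to $\theta(x)$ shows the component of $y$ in $D_2(\alpha_2)$ is infinite.

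To make the escaping-path argument precise I would use bounded geometry: in a uniformly locally finite graph, an infinite connected component contains an infinite simple path (by König's lemma, since every ball is finite and the component is infinite, there are arbitrarily long simple paths from any fixed vertex, which assemble into an infinite ray). This replaces the vague "follow a path" with a genuine ray $x = x_0, x_1, x_2, \ldots$ in $D_1(\alpha_1)$ with $d(x_k, x_{k+1}) \le \alpha_1$ and $x_k \to \infty$. Applying $\theta$ and using the Lipschitz-type bound $d(\theta(x_k), \theta(x_{k+1})) \le \alpha_2$ gives a sequence in $D_2$ with consecutive $\alpha_2$-bounded gaps; the properness of the coarse equivalence guarantees $\theta(x_k) \to \infty$ as well, so the image is genuinely infinite and not eventually constant.

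The main obstacle I expect is the bookkeeping around coarse surjectivity and the fact that $\theta$ need not be injective: one must ensure that the image ray in $D_2$ does not accumulate or repeat in a way that makes it effectively finite. This is handled by properness of the coarse equivalence, which forces preimages of bounded sets to be bounded, so $x_k \to \infty$ implies $\theta(x_k) \to \infty$; passing to a subsequence if necessary gives an honest infinite path. By symmetry (interchanging the roles of $D_1$ and $D_2$ and using the coarse inverse of $\theta$) the argument is reversible, but since the statement is one-directional only the forward implication is needed.
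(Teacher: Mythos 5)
Your proof is correct, but it runs in the opposite logical direction from the paper's and uses a different tool. The paper argues by contraposition (using that the roles of $D_1$ and $D_2$ are symmetric): assuming one of the sets has a finite component $\Gamma$ of $D_1(\alpha+2C)$ for every scale, it forms the set $Z$ of points of $D_2$ lying within $C$ of $\Gamma$ and checks that $Z$ is finite, nonempty, and at distance at least $\alpha$ from the rest of $D_2$, so $D_2(\alpha)$ acquires a finite component. You prove the forward implication directly, which forces you to transfer \emph{infiniteness} of components rather than finiteness; that is why you need K\"onig's lemma to extract an escaping ray from an infinite component of $D_1(\alpha_1)$ before pushing it into $D_2$. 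Both arguments rest on the same coarse mechanism --- replace each point by a $C$-close point of the other set and inflate the scale from $\alpha_1$ to $\alpha_1+2C$ --- but the contrapositive version is lighter, since finite sets push forward to finite sets with no need for rays or properness bookkeeping. A small streamlining of your write-up: you do not need a globally defined coarse equivalence $\theta$, nor any discussion of its injectivity or of passing to subsequences (which could in any case destroy the bounded-gap property of the path). Just pick, for each vertex $x_k$ of the ray, some $y_k\in D_2$ with $d(x_k,y_k)<C$; then $y,y_0,y_1,\dots$ is a path in $D_2(\alpha_1+2C)$, and the $y_k$ take infinitely many values because the $x_k$ leave every ball, which is all that is needed to conclude that the component of $y$ is infinite.
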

\begin{proof}
Suppose that $D_1$ does not satisfy the property (1) of Theorem \ref{T2}, and, for any $\beta>0$, let $\Gamma_1(\beta)$ be a finite connected component of $D_1(\beta)$. Take an arbitrary $\alpha>0$. As both inclusions $D_1\subset X$ and $D_2\subset X$ are coarse equivalences, there exists $C>0$ such that for any $z\in X$ there exists $x\in D_1$ and $y\in D_2$ such that $d(x,z)<C$ and $d(y,z)<C$. Taking $z=x$ or $z=y$, we have that for any $x\in D_1$ there exists $y\in D_2$ such that $d(x,y)<C$,  and vice versa. 
Let $Z=\{z\in D_2:d(z,x)<C \mbox{\ for\ some\ }x\in \Gamma_1(\alpha+2C)\}$. Take $z\in Z$, $y\in D_2\setminus Z$. There exist $u\in \Gamma_1(\alpha+2C)$, $v\in D_1$ such that $d(z,u)<C$, $d(y,v)<C$. Suppose that $v\in\Gamma_1(\alpha+2C)$. Then $y\in Z$ --- a contradiction, hence $v\in D_1\setminus\Gamma_1(\alpha+2C)$. Therefore, $d(u,v)\geq\alpha+2C$. It follows from the triangle inequality that 
\begin{equation}\label{eee}
d(y,z)\geq d(v,u)-d(y,v)-d(z,u)>\alpha+2C-2C=\alpha.
\end{equation}
As $\Gamma_1(\alpha)$ is finite for any $\alpha$, and as $D_1$ is of bounded geometry, the set $Z$ is finite. Consider $Z$ as a set of vertices of the graph $D_2(\alpha)$. It follows from (\ref{eee}) that $Z$ is not connected with any point from $D_2(\alpha)\setminus Z$, hence any connected component of $Z$ is a finite connected component of $D_2(\alpha)$. Thus, $D_2$ does not satisfy the property (1) of Theorem \ref{T2}.
\end{proof} 

Recall that a metric space $X$ has {\it bounded geometry} if there exists $r>0$ such that for any $R>0$ there exists $N\in\mathbb N$ such that any ball of radius $R$ can be covered by not more than $N$ balls of radius $r$ (cf. \cite{GTY}, where it is discussed that this definition for manifolds can be derived from the traditional local definition via curvature).

It is shown in \cite{Ludewig}, Prop. 2.5, that if $X$ is a complete Riemannian manifold admitting a decomposition $X=X_1\cup X_2$ with closed $X_1$ and $X_2$ such that $K_0(C^*(X_1))=K_0(C^*(X_2))=0$ then $[p_\phi]=0$  for any Wannier projection $p_\phi$ with uniformly discrete set of localization centers. The next theorem shows that, under the bounded geometry condition, vanishing of $[p_\phi]$ is much more common. Importance of this condition is explained by the Greene's theorem: any smooth manifold admits a Riemannian metric of bounded geometry \cite{Greene}.

\begin{thm}\label{c:man}
Let $X$ be a connected proper measure space of bounded geometry such that the measure on $X$ is non-atomic. Then, for any Wannier projection $p_\phi$ with a uniformly discrete set $D_0$ of localization centers we have $[p_\phi]=0$ in $K_0(C^*_{\mathbb C}(X))$. 
\end{thm}

\begin{proof}
By Lemma \ref{coarse-equiv}, Corollary \ref{corT2} and Theorem \ref{t1}, it suffices to show that there exists a uniformly discrete set $D\supset D_0$ of bounded geometry, coarsely equivalent to $X$, which satisfies the property (1) of Theorem \ref{T2}. Since $X$ has bounded geometry, there exists $r>0$ such that any ball of radius $R$ is covered by at most $N$ balls of radius $r/2$. Given $c>0$, we say that a subset $A\subset X$ is $c$-disjoint if $d(x,y)>c$ for any $x,y\in A$, $y\neq x$. 
By Zorn's Lemma, there exists a maximal discrete $r$-disjoint subset $D\subset X$ containing $D_0$. It is clear that $D$ is uniformly discrete. Maximality of $D$ implies that for any $x\in X$ there exists $y\in D$ with $d(x,y)\leq r$, so $D$ is coarsely equivalent to $X$. Since any ball of radius $r/2$ contains not more than one point of $D$, $D$ has bounded geometry. We claim that the graph $D(3r)$ is connected and, therefore, satisfies the property (1) of Theorem \ref{T2}. Indeed, suppose the contrary. If $D(3r)$ is not connected then we can write $D=A_1\sqcup A_2$, where one has $d(x,y)\geq 3r$ if $x\in A_1$, $y\in A_2$. Set $X_i=\{x\in X:d(x,A_i)\leq r\}$, $i=1,2$. Then $X=X_1\cup X_2$ and $X_1\cap X_2=\emptyset$, moreover, $d(X_1,X_2)\geq r$, which means that $X$ is not connected.
\end{proof}

\section{Homological characterization}\label{Ch6}

Here we provide a homological characterization of metric spaces, satisfying the equivalent properties from Theorem \ref{T2}. 

Let $\Gamma$ be a graph with the set of vertices $\Gamma_0$ and the set of edges $\Gamma_1$. If $\Gamma_0$ is equipped with a metric then this metric can be extended to the points at the edges: each edge is identified with the segment of length equal to the distance between the endpoints, and the distance between two points belonging to the edges is the infimum of path lengths. We can then consider $\Gamma$ as a one-dimensional cell space. If it is uniformly discrete and has bounded geometry then the number of edges adjacent to each vertex is uniformly bounded, hence $\Gamma$ is locally compact. The group $C_k^{BM}(\Gamma)$ of $k$-dimensional Borel--Moore chains is the abelian group of all formal sums $\sum_{x\in\Gamma_k}\lambda_x\cdot x$, $k=0,1$, $\lambda_x\in\mathbb Z$. The standard differential $\partial:C_1^{BM}(\Gamma)\to C_0^{BM}(\Gamma)$ is defined by $\partial(x)=t(x)-s(x)$, where $t(x)$ and $s(x)$ denote the source and the target point of the edge $x$, respectively (here it is supposed that $\Gamma$ is oriented). The quotient $H_0^{BM}(\Gamma)=C_0^{BM}(\Gamma)/\partial C_1^{BM}(\Gamma)$ does not depend on the choice of orientation, and is the 0-th Borel--Moore homology group of $\Gamma$. Details on Borel--Moore homology can be found in \cite{B-M,Spanier} and in \cite{Ranicki}, Appendix A. 

Recall that a path in a graph is a sequence (finite or infinite) of vertices $(x_1,x_2,\ldots)$ such that $x_i$ and $x_{i+1}$ are adjacent, i.e.\ there exists an edge $[x_i,x_{i+1}]\in\Gamma_1$ with the endpoints $x_i$ and $x_{i+1}$, for any $i=1,2,\ldots$. A path is simple if each vertex enters the path not more than once. 
A simple path $\gamma=(x_1,x_2,\ldots)$ can be considered as a chain $\sum_{i}[x_i,x_{i+1}]\in C_1^{BM}(\Gamma)$. For convenience, we shall identify paths in $\Gamma$ with the corresponding chains in $C_1^{BM}(\Gamma)$.

Let $c=\sum_{x\in\Gamma_0}x\in C_0^{BM}(\Gamma)$ be the chain with all coefficients equal to 1. 

\begin{lem} Let $\Gamma$ be a connected component of $D(\alpha)$. The following are equivalent:
\begin{enumerate}
\item
$\Gamma$ is infinite;
\item 
$[c]=0$ in $H_0^{BM}(\Gamma)$. 
\end{enumerate}

\end{lem}
\begin{proof}
$(1)\Longrightarrow (2)$: Let $\Gamma$ be infinite. Endow the set $\Gamma_0$ of vertices with another metric $\rho$ defined as follows: $\rho(x,y)$ is the smallest number of edges on a path that connects $x$ and $y$.  For $x\in\Gamma_0$, let $B_r(x)=\{y\in\Gamma_0:\rho(x,y)\leq r\}$. In particular, $B_1(x)$ consists of $x$ and all vertices adjacent to $x$.

Recall that a geodesic segment is a path of minimal length. Clearly, each two vertices in $\Gamma$ can be connected by a geodesic segment. A geodesic ray is an infinite path such that each its segment is geodesic. Note that geodesic segments and geodesic rays are simple paths.

\begin{lem}\label{geodesic1}
For each vertex $x\in\Gamma$ there exists a geodesic ray $\gamma_x$ beginning at $x$.

\end{lem}
\begin{proof}
As $\Gamma$ is infinite of bounded geometry, there is a sequence $\{y_n\}$ of vertices such that $\lim_{n\to\infty}d(x,y_n)=\infty$. As $d(x,y)\leq \alpha\rho(x,y)$, $\lim_{n\to\infty}\rho(x,y_n)=\infty$ as well. As $\Gamma$ is connected, we can connect $x$ with each $y_n$ by a geodesic segment $[x,y_n]$. Among the vertices in $B_1(x)$ (there are finitely many of them) there exists $z_1\in B_1(x)$ with the property that infinitely many geodesic segments $[x,y_n]$, $n\in\mathbb N$, pass through $z_1$.  Now we proceed by induction. Suppose that we have already found vertices $z_0=x$, $z_1,\ldots,z_m$ such that $\rho(z_i,z_j)=|i-j|$, $i,j=0,\ldots,m$, and infinitely many geodesic segments $[x,y_n]$ pass through $z_0,z_1,\ldots,z_m$. The set $B_1(z_m)$ is also finite, so there exists $z'\in B_1(z_m)$ such that infinitely many geodesic segments $[x,y_n]$ pass through $z_0,z_1,\ldots,z_m,z'$. It is clear that $\rho(x,z')\leq m+1$. Let $(z_0,\ldots,z_i,\ldots,z_m,z',z'',\ldots,y_n)$ be a geodesic segment. Then $$m+2=\rho(x,z'')\leq \rho(x,z')+\rho(z',z'')=\rho(x,z')+1,$$ and, therefore, $\rho(x,z')=m+1$. Similarly, we can show that $\rho(z_i,z')=m+1-i$ for $i=1,2,\ldots,m$. Setting $z_{m+1}=z'$, we get vertices $z_0=x$, $z_1,\ldots,z_{m+1}$ such that $\rho(z_i,z_j)=|i-j|$, $i,j=0,\ldots,m+1$, and infinitely many geodesic segments $[x,y_n]$ pass through $z_0,z_1,\ldots,z_{m+1}$. Thus, we proved the existence of an infinite sequence of vertices $z_0,z_1,\ldots,$ without repetition such that $\rho(z_i,z_j)=|i-j|, i,j=0,1,\ldots$. This sequence gives a simple path $\gamma_x$, which is a geodesic ray. 
\end{proof}

\begin{remark}\label{r:geodesic1}
Note that the geodesic ray $\gamma_x$ constructed in the proof of Lemma \ref{geodesic1} has the property that each vertex $z_m$ on this geodesic ray belongs to infinitely many geodesic segments $[x,y_n]$, but $\gamma_x$ need not contain the whole geodesic segments $[x,y_n]$. 
\end{remark}

As the constructed geodesic ray $\gamma_x$ is a simple path, i.e.\ it passes each vertex only once,  it defines a 1-dimensional Borel-Moore chain, denoted also by $\gamma_x$, which satisfies $\partial \gamma_x=x$. If we show that the sum $\sum_{x\in\Gamma_0}\gamma_x$ is well defined (i.e.\ if each edge enters only a finite number of chains of the form $\gamma_x$) then $\gamma=\sum_{x\in \Gamma_0}\gamma_x\in   C_0^{BM}(\Gamma)$ satisfies $\partial\gamma=c$, hence $[c]=0$.  Thus, it remains to show that the geodesic rays $\gamma_x$ can be chosen in a such way that only a finite number of these geodesic rays pass through any edge. This follows from Lemma below, where we also use the metric $\rho$. 

\begin{lem}\label{geodesic2}
For any $y_0\in \Gamma_0$ and for any  $r\in \mathbb N$ there exists $R>0$ such that for any $x\in\Gamma_0$ with $\rho(y_0,x)>R$ there exists a geodesic ray $\gamma_x$ such that it begins at $x$, and $\gamma_x\cap B_r(y_0)=\emptyset$, where $B_r(y_0)$ denotes the ball of radius $r$ centered at $y_0$ with respect to the metric $\rho$. 

\end{lem}
\begin{proof}
Assume the contrary. Then there exists $y_0\in\Gamma_0$ and  $r\in \mathbb N$ such that for any $R>0$ there exists $x\notin B_R(y_0)$ such that any geodesic ray $\gamma_x$ that begins at $x$ intersects $B_r(y_0)$. Taking  $R=r+n$, $n\in\mathbb N$, we get a sequence $\{x_n\}_{n\in\mathbb N}$ of points with $\rho(y_0,x_n)>r+n$ such that any geodesic ray  beginning at $x_n$ intersects $B_r(y_0)$. For each $x_n$ let $z_n$ be a vertex in $B_{r+1}(y_0)$ closest to $x_n$ (if there are several such points then we choose $z_n$ arbitrarily among these points). Clearly, $z_n\in B_{r+1}(y_0)\setminus B_r(y_0)$: if $z_n\in B_r(y_0)$ then any geodesic segment $[x_n,z_n]$ would contain a vertex $z'$ with $\rho(y_0,z')=r+1$ which contradicts the choice of $z_n$. 

Consider the set of points $\{z_n:n\in\mathbb N\}\subset B_{r+1}(y_0)\setminus B_r(y_0)$. It is finite, hence there exists $z\in\{z_n:n\in\mathbb N\}$ such that $z=z_n$ for infinitely many $n$. Passing to a subsequence, we may assume that $z=z_n$ for any $n\in\mathbb N$.

Now connect the vertex $x_1$ with each $x_n$, $n>1$, by a geodesic segment $\bar\gamma_n=[x_1,x_n]$. We claim that $\bar\gamma_n\cap B_r(y_0)=\emptyset$. Suppose the contrary, and let $x'\in B_r(y_0)\cap\bar\gamma_n$ for some $n>1$. As any geodesic segment $[x',x_n]$ passes through some point of $B_{r+1}(y_0)\setminus B_r(y_0)$ and $z$ is a vertex in $B_{r+1}(y_0)$ closest to $x_n$, we have $\rho(x_n,x')>\rho(x_n,z)$ for any $n\in\mathbb N$. As $x'$ lies on the geodesic segment $\bar\gamma_n$, $\rho(x_1,x_n)=\rho(x_1,x')+\rho(x',x_n)$. Thus 
$$
\rho(x_1,x_n)=\rho(x_1,x')+\rho(x_n,x')>\rho(x_1,z)+\rho(x_n,z)\geq\rho(x_1,x_n)
$$  
provides a contradiction.

As in the proof of Lemma \ref{geodesic1} (see Remark \ref{r:geodesic1}), we can construct a geodesic ray $\gamma_{x_1}$ beginning at $x_1$ such that each of its vertices lies on at least one of the geodesic segments $[x_1,x_n]$. The latter implies that $\gamma_{x_1}\cap B_r(y_0)=\emptyset$, which contradicts the assumption that any geodesic ray beginning at $x_1$ intersects $B_r(y_0)$. 
\end{proof}

Take an arbitrary $y_0\in\Gamma_0$. By Lemma \ref{geodesic2}, for $r=n$ we can find $R_n$ such that for any $x\in \Gamma$ with $R_n<\rho(y_0,x)\leq R_{n+1}$ there exists a geodesic ray $\gamma_x$ beginning at $x$ such that $\gamma_x\cap B_n(y_0)=\emptyset$. Set $\gamma=\sum_{x\in\Gamma_0}\gamma_x$. Consider an edge $e$ contained in $\gamma$. Then both  endpoints of $e$ lie in $B_n(y_0)$ for some $n$. If $\gamma_x$ passes through $e$ then $x\in B_{R_n}(y_0)$, therefore there are only finitely many vertices $x$ such that $\gamma_x$ contains $e$, and each $\gamma_x$ contains $e$ not more than once. Thus $\gamma=\sum_{x\in\Gamma_0}\gamma_x$ is a well defined chain with $\partial\gamma=c$.

$(2)\Longrightarrow (1)$: If $\Gamma$ is finite then any chain $\gamma\in C_1^{BM}(\Gamma)$ can be written as a finite sum $\gamma=\sum_{e\in\Gamma_1}\lambda_e e$ with integer coefficients $\lambda_e$. Then $\partial\gamma=\sum_{x\in\Gamma_0}k_x x$ satisfies $\sum_{x\in \Gamma_0}k_x=0$ (as $\partial e$ satisfies this identity for each $e\in\Gamma_1$). As the sum of the coefficients for $c$ over all vertices equals $|\Gamma_0|\neq 0$, $c$ cannot lie in the range of $\partial$, i.e.\ $[c]\neq 0$.   
\end{proof}

As Borel--Moore homology is functorial, we have maps $H_0^{BM}(D(\alpha))\to H_0^{BM}(D(\beta))$ when $\alpha<\beta$, and can pass to the direct limit. Thus we obtain the following result.

\begin{cor}\label{BM}
The following are equivalent:
\begin{enumerate}
\item
$(i_D)_*([1_D])=0$ in $K_0(C^*_H(D))$;
\item
$[c]=0$ in $\dr\lim_\alpha H_0^{BM}(D(\alpha))$.
\end{enumerate}

\end{cor}
\begin{proof}
Let us show that (1) implies (2). Suppose that (2) does not hold. Then for any $\alpha>0$ there exists a finite connected component in $D(\alpha)$. Then, by Lemma \ref{box}, $(i_D)_*([1_D])\neq 0$. In the opposite direction, if (2) holds, i.e.\ if $[c]=0$ in $\dr\lim_\alpha H_0^{BM}(D(\alpha))$ then there exists $\alpha>0$ such that each component of $D(\alpha)$ (and $D(\beta)$ for any $\beta>\alpha$) is infinite. Then, by Corollary \ref{corT2}, $(i_D)_*([1_D])=0$.
\end{proof}

It would be interesting to find a more direct proof of Corollary \ref{BM}, avoiding graph theory.

\section*{Acknowledgements}
The authors are grateful to the anonymous referee for valuable remarks, which allowed to improve the first version of the paper.

The results of Section \ref{section2} and \ref{Ch5} were obtained jointly by both authors, the results of Section \ref{section3} and \ref{Ch6} were obtained by V. Manuilov and supported by the RSF grant 23-21-00068. The work of Yu. Kordyukov is performed under the development program of Volga Region Mathematical Center (agreement No. 075-02-2023-944).

\end{document}